\newcommand{\R}{\mathbb{R}}
\newcommand{\diff}{\,\mathrm{d}}
\let\le=\leqslant
\let\ge= \geqslant
\newcommand\txt{\textstyle}
\newtheorem{theorem}{Theorem}
\newtheorem{corollary}[theorem]{Corollary}
\newtheorem{remark}[theorem]{Remark}
\begin{document}

\title[Dynamic pressure in a solitary wave]
{Extrema of the dynamic pressure in a solitary wave}

\author[F. Genoud]{Fran\c cois Genoud}
\address{Delft Institute of Applied Mathematics \\
Delft University of Technology \\
Mekelweg 4 \\
2628~CD Delft \\ The Netherlands}
\email{s.f.genoud@tudelft.nl}

\begin{abstract}
We study the dynamic pressure in an irrotational solitary wave propagating at the surface of
water over a flat bed, under the influence of gravity. We consider the nonlinear regime, that is,
the case of waves of moderate to large amplitude. We prove that, independently of the wave amplitude,
the maximum of the dynamic pressure is attained at the wave crest, while its minimum is attained
at infinity.
\end{abstract}

\maketitle


\section{Introduction}

Solitary waves can propagate on a free surface of water over a flat bed under the influence of gravity
over long distances, while maintaining a constant shape. They are two-dimensional objects
in the sense that they present essentially no variations in the horizontal direction perpendicular
to the direction of propagation of the wave. They are thus fully characterised by a vertical cross section
parallel to the direction of propagation, where they take the form of a single hump of elevation of the water,
moving at constant speed. In the reference frame moving with the wave, their profile is steady and 
symmetric with respect to the vertical axis through the wave crest, and decreases rapidly away from this
axis, see Fig.~\ref{fig1}. 
The first observation of this phenomenon was made by Scott Russell in 1834 in a canal near
Edinburgh. His report and subsequent laboratory experiments played an important role in the early
developments of water waves theory, see \cite{craik}. 

We are interested here in the description of solitary waves of moderate and large amplitude, which fail
to be captured accurately by the linear theory of water waves. As the effects of surface tension and 
viscosity are negligible in this regime, and it is physically reasonable to assume the water has a constant
density $\rho$ (we take $\rho=1$ throughout), the waves are described by
the Euler equation for a homogeneous incompressible fluid with free boundary, 
under the influence of gravity only. Moreover, motivated by the observation that 
large areas of abyssal plains in the oceans are essentially flat, 
we will restrict our attention here to a body of water over a flat bed.

The rigorous mathematical investigation, beyond the linear theory approximation, 
of the fluid motion beneath gravity water waves has made important progress in recent years,
see \cite{const_esch,const_invent,const_strauss,const_esch_hsu} and references in these papers.
The study of the pressure in the fluid is of particular interest, from a theoretical point of view but also
due to its important practical applications in maritime engineering. A good knowledge of the pressure
field is indeed essential to compute the forces acting on maritime structures. On the other hand,
pressure measurements at the bottom of the water can also be used to infer precious information about the 
waves on the surface \cite{const_estimate, const_recovery, oliveras, florian, hsu}.

In the context of irrotational waves, the pressure field was investigated 
by Constantin and Strauss \cite{const_strauss} for periodic waves,
and Constantin, Escher and Hsu \cite{const_esch_hsu} for solitary waves. Their main results
concern the monotonicity properties of the pressure in the fluid domain. Although
fluid motion can be driven by pressure gradients, in water at rest the {\em hydrostatic pressure}
-- which has a constant vertical gradient pointing downwards -- only counterbalances gravity
and does not induce any motion. The study of the relation between fluid motion and pressure 
therefore benefits from introducing the {\em dynamic pressure}, which is 
defined as the difference between the total pressure in the fluid and the hydrostatic pressure,
see \eqref{dynamic}. The dynamic pressure beneath irrotational periodic gravity
water waves was recently investigated by Constantin in \cite{const}, where it is proved 
that the maximum of the dynamic pressure
is attained at the wave crest and its minimum at the wave trough. Since it is known \cite{amick_toland}
that periodic waves converge to solitary waves in the long-wave limit, it is natural to guess 
that similar results hold for solitary waves. (Note that one should be careful when applying this
kind of reasoning to dynamic properties of the waves, for it was shown in \cite{const_esch} that
particle trajectories in the fluid undergo a dramatic qualitative change in the long-wave limit.) 
In the present paper we prove, using  
maximum principles for elliptic partial differential equations, that it is indeed the case. Namely,
the maximum of the dynamic pressure in an irrotational solitary wave is attained at the crest, while
its minimum is attained at infinity.

\subsection*{Acknowledgement} I am grateful to Adrian Constantin for drawing my attention
to this problem, and to the anonymous referees who helped improve the 
presentation of the paper.


\section{Mathematical formulation of the problem}\label{model.sec}

Irrotational solitary gravity water waves are two-dimensional. 
It was indeed proved in \cite{craig2} that, in the absence of vorticity,
no truly three-dimensional solitary waves can exist.
A travelling solitary wave is thus fully characterised by the description of 
a cross section of the flow, perpendicular to the crest line. 
We choose Cartesian 
coordinates $(X,Y)$, the $Y$-axis pointing vertically upwards, the $X$-axis being parallel to the 
direction of propagation of the wave. We require the flow to be at rest for $X\to\pm\infty$, and we
choose the $Y$ coordinate so that $Y=0$ there, with the flat bed lying at depth $Y=-d, \ d>0$.
We suppose that the crest of the wave is at $X=0$ at time $t=0$.

We investigate the dynamic pressure in a
permanent wave with profile $Y=\eta(X-ct)$, moving at constant speed $c>0$, 
so we assume that the velocity field of the flow has the form
\[
(u,v)=(u(X-ct),v(X-ct)).
\]
Under these assumptions, time can be removed from the governing equations by describing
the wave in the moving frame, that is, in the coordinates
\[
x=X-ct, \quad y=Y.
\]
In the new reference frame, which moves at speed $c$ in the direction of propagation of the 
wave, the wave is stationary and the flow is steady, see Fig.~\ref{fig1}.

\begin{figure}[th]
\includegraphics[width=130mm]{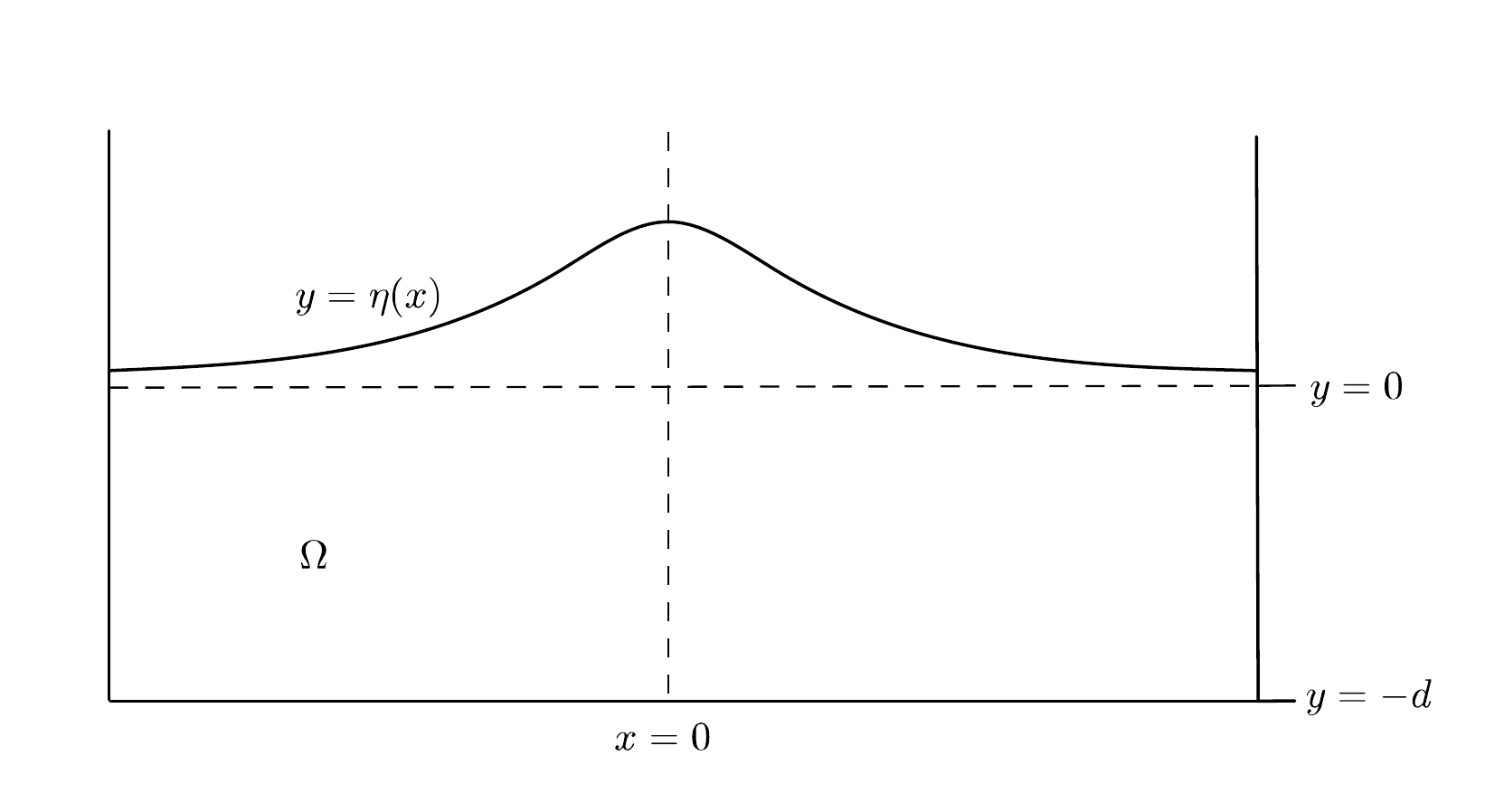}
\caption{Solitary wave in the moving reference frame.}
\label{fig1}
\end{figure}

For water waves, it is physically reasonable to assume that the fluid is incompressible and homogeneous
(with constant density $\rho=1$), which yields the continuity equation
\begin{equation}\label{continuity}
u_x+v_y=0.
\end{equation}
The motion is governed by Euler's equation which, in the moving frame, reduces to
\begin{equation} \label{euler}
\begin{cases}
(u-c)u_x+vu_y = -P_x, & \quad -d\le y \le \eta(x), \\
(u-c)v_x+vv_y = -P_y-g, & \quad -d\le y \le \eta(x),
\end{cases}
\end{equation}
where $P=P(x,y,t)$ is the pressure and $g$ is the constant of gravitational acceleration.
The boundary conditions associated with \eqref{continuity}--\eqref{euler} are
\begin{eqnarray}
&P=P_\textrm{atm} 	& \text{on} \quad y = \eta(x), \label{Patm} \\
&v=(u-c)\eta_x 		&\text{on} \quad y = \eta(x), \label{surface} \\
&v=0  			&\text{on} \quad y = -d. \label{bed}
\end{eqnarray}
Conditions \eqref{surface} and \eqref{bed} respectively express 
the facts that the particles do not cross the water surface and that
the flat bed at the bottom is impermeable. Finally, assuming that the flow is irrotational 
(which is the case in the absence of underlying currents)
yields the additional condition
\begin{equation}\label{irrot}
u_y-v_x=0.
\end{equation}
For a solitary wave, the irrotational water wave equations \eqref{continuity}--\eqref{irrot}
are complemented by the requirements that the water surface be asymptotically flat, and the flow at rest,
as $x\to\pm\infty$. With our choice of coordinates, this reads
\begin{equation}\label{flat}
\eta(x)\to 0 \qquad\text{as}\quad |x|\to\infty,
\end{equation}
\begin{equation}\label{rest}
u(x,y) \to 0 \quad\text{and}\quad v(x,y) \to 0 \qquad\text{as}\quad |x|\to\infty,
\end{equation}
at any depth $y$ in the fluid domain.

It should be noted here that the parameters $c>0$ and $d>0$ cannot be chosen arbitrarily, 
as nontrivial solutions can only exist \cite{amick} if
\[
c>\sqrt{gd}.
\]
Moreover, all solitary waves are symmetric about a single crest, 
with a strictly monotone profile on either side of this crest \cite{craig}, and the convergence
in \eqref{rest} is exponential \cite{amick,mcleod}.

In view of \eqref{continuity}, the problem can be conveniently reformulated in terms of the
stream function $\psi(x,y)$ defined up to a constant by
\[
\psi_x=-v \quad\text{and}\quad \psi_y=u-c.
\]
Clearly, $\psi$ is harmonic in the fluid domain 
\[
\Omega=\{(x,y)\in\R^2 : -d<y<\eta(x), \ x\in\R\}.
\] 
Furthermore, it follows from \eqref{surface} and \eqref{bed} that $\psi$ is
constant on the free surface $y=\eta(x)$ and on the flat bed $y=-d$, respectively.
Hence, imposing the condition 
\[
\psi(0,\eta(0))=0, 
\]
$\psi$ satisfies the free boundary problem
\begin{equation}\label{free}
\begin{cases}
\Delta \psi = 0 & \text{in} \ \Omega,\\
\txt\frac12 |\nabla \psi|^2 + P + gy = C & \text{in} \ \Omega,\\
\psi = 0 & \text{on} \ y=\eta(x),\\
\psi = m & \text{on} \ y=-d,
\end{cases}
\end{equation}
where $C$ and $m$ are physical constants. We recognise Bernoulli's law in the 
second equation. The constant $C$, related to the total energy of the fluid,
can be evaluated in the limit $|x|\to\infty$ 
along the free surface $y=\eta(x)$, which yields
\[
C=\frac{c^2}{2}+P_\textrm{atm}.
\]
The constant $m$, called the relative mass flux of the flow, satisfies
\begin{equation}\label{massflux}
m=\psi(x,-d)=-\int_{-d}^{\eta(x)}\psi_y(x,y)\diff y=\int_{-d}^{\eta(x)}[u(x,s)-c]\diff y,  \quad x\in\R.
\end{equation}
As discussed in \cite{const_esch}, we have $u-c<0$ throughout $\Omega$, and this
inequality extends to $\overline{\Omega}$ except in the case of 
the wave of greatest height, for which the free surface is analytic everywhere outside of 
$x=0$, where the wave profile is continuous but not differentiable, the curve having 
a corner there containing an angle of $2\pi/3$. In this case we have
$u=c$ at the crest, while $u-c<0$ elsewhere in $\overline{\Omega}$. It thus follows from
\eqref{massflux} that $m>0$. In the present work, for technical reasons 
(see Remark~\ref{tech}), we shall restrict our attention to smooth waves only.


\section{Main result}

Our main result concerns the extrema of the dynamic pressure defined as
\begin{equation}\label{dynamic}
p(x,y)=P(x,y)-(P_\textrm{atm} - g y),
\end{equation}
where $P(x,y)$ denotes the total pressure of the fluid in the moving frame.

\begin{theorem}\label{main.thm}
The dynamic pressure in a smooth irrotational solitary wave attains its maximum value at the
crest, is strictly positive throughout the fluid domain, and decays to zero far away from the crest,
provided there is no underlying current.
\end{theorem}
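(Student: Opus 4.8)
The plan is to rewrite the dynamic pressure purely in terms of the stream function and then to study a single harmonic function via the maximum principle. Bernoulli's law (the second equation of \eqref{free}) together with $C=\tfrac{c^2}{2}+P_{\mathrm{atm}}$ gives, upon substitution into \eqref{dynamic},
\[
p=\tfrac12\bigl(c^2-|\nabla\psi|^2\bigr),
\]
so that controlling $p$ amounts to controlling $q:=|\nabla\psi|^2=(u-c)^2+v^2$ from above and below. The decisive observation is that the complex velocity $w=(u-c)-\mathrm{i}v$ is holomorphic in $\Omega$: its Cauchy--Riemann equations are precisely the continuity equation \eqref{continuity} and the irrotationality condition \eqref{irrot}. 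Since $u-c<0$ throughout $\overline{\Omega}$ for a smooth wave, $w$ never vanishes, whence $q=|w|^2>0$ and $\log q=2\log|w|$ is harmonic in $\Omega$. This already yields the decay statement: by \eqref{rest} we have $q\to c^2$ as $|x|\to\infty$, so $p\to0$ far from the crest, the convergence being exponential by \cite{amick,mcleod}.

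Next I would record the boundary values of $q$. On the free surface $P=P_{\mathrm{atm}}$ and $y=\eta(x)$, so $p=g\eta$ and hence $q=c^2-2g\eta\le c^2$, with equality only in the limit $|x|\to\infty$; along the surface $q$ is therefore minimal exactly at the crest, where $\eta$ is maximal. On the flat bed $\psi_x=-v\equiv0$, so $\psi_{xx}=0$ there, and harmonicity forces $\psi_{yy}=-\psi_{xx}=0$; consequently $\partial_y q=2\psi_x\psi_{xy}+2\psi_y\psi_{yy}=0$ on $y=-d$ (equivalently $u_y=v_x=0$ on the bed, by \eqref{irrot}). This homogeneous Neumann condition is the crux: it lets me extend $\log q$ by even reflection across the flat bed to a function that is harmonic on the doubled domain $\Omega^\ast$ obtained by reflecting $\Omega$ in $y=-d$. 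The boundary of $\Omega^\ast$ consists of the free surface and its mirror image, on both of which $q=c^2-2g\eta$, approaching $c^2$ at infinity.

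With a single harmonic function $\log q$ on $\Omega^\ast$ in hand, the two remaining claims follow from the maximum principle. For the location of the maximum, the minimum of $\log q$ over $\overline{\Omega^\ast}$ is attained on the boundary; since $q=c^2-2g\eta$ there is least at the crest and its reflected image, and since the asymptotic value $c^2$ is strictly larger, the minimum of $q$ sits at the crest. Thus $q\ge q(0,\eta(0))$ on all of $\overline{\Omega}$, i.e.\ $p$ attains its maximum value $g\eta(0)$ at the crest. For strict positivity, the boundary values of $q$ on $\Omega^\ast$ are all $\le c^2$ and reach $c^2$ only at infinity, so the strong maximum principle gives $q<c^2$ at every interior point of $\Omega^\ast$, in particular along the bed, which is now an interior line. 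Hence $p=\tfrac12(c^2-q)>0$ throughout $\Omega$.

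The one genuinely delicate point is that $\Omega$ (and $\Omega^\ast$) is unbounded, so the maximum principle must be invoked in a Phragm\'en--Lindel\"of form, treating infinity as a boundary component carrying the value $c^2$; here I would lean on the exponential decay of $u,v,\eta$ from \cite{amick,mcleod} to show that $\log q-\log c^2$ is bounded and exhibits no growth at infinity, which legitimises the comparison. A secondary difficulty, namely the a priori unknown sign of $u$ on the bed---on which the naive estimate $q=(u-c)^2<c^2$ there would hinge---is entirely circumvented by the reflection, which turns the bed into an interior curve governed by the strong maximum principle. The smoothness hypothesis (Remark~\ref{tech}) is what guarantees $w\neq0$ up to the boundary and that the reflection is regular enough to apply.
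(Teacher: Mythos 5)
Your proof is correct, but it takes a genuinely different route from the paper's. The paper works with $p$ itself on the half-domain $\Omega_+=\{x>0\}$: it uses the symmetry of the velocity field, harmonicity of $v$ (to get $v>0$ in $\Omega_+$), Hopf's boundary-point lemma to obtain the strict monotonicity \eqref{hopf}--\eqref{hopf2} of $p$ along the bed and along the vertical through the crest, the superharmonicity \eqref{super} of $p$ to rule out an interior minimum, and finally a quasilinear elliptic equation whose coefficients $a,b$ are bounded precisely because the wave is smooth, to rule out an interior maximum. You instead reduce everything to the single harmonic function $\log q=2\log|w|$, where $w=(u-c)-\mathrm{i}v$ is the holomorphic complex velocity, nonvanishing on $\overline{\Omega}$ for a smooth wave since $u-c<0$ there; you read off the Dirichlet data $q=c^2-2g\eta$ on the surface from Bernoulli, verify the Neumann condition $\partial_y q=0$ on the bed, reflect evenly across the bed (cleanest by applying the classical Schwarz reflection to $w$, which is real on the bed since $v=0$ there), and then run maximum/minimum principles on the doubled strip-like domain. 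Your route needs neither the symmetry of the flow nor Hopf's lemma, yields the explicit bound $p\le g\eta(0)$ with equality only at the crest, and, as you observe, the reflection neatly sidesteps the sign of $u$ on the bed. What it does \emph{not} give is the strict monotonicity of $p$ along the bed and the crest line, which is exactly the ingredient the paper reuses to prove Corollary~\ref{height.cor} (and which makes that corollary valid even for the wave of greatest height). Two points you sketch should be carried out in full, though both work: (i) the Phragm\'en--Lindel\"of step --- since $q\to c^2$ uniformly in $y$ as $|x|\to\infty$ (the decay in \eqref{rest} being exponential and uniform over the finite depth), it suffices to apply the ordinary maximum principle on the truncations $\Omega^{\ast}\cap\{|x|<R\}$, whose lateral boundary values tend to $c^2$, and let $R\to\infty$; and (ii) the reflection principle for the homogeneous Neumann condition, which is standard but is most painlessly justified via the Schwarz reflection of $w$ itself, after which $\log|w|$ is automatically harmonic on $\Omega^{\ast}$ because the extended $w$ is holomorphic and zero-free.
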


\begin{proof}
To prove the theorem we adapt a method based on the maximum principle for elliptic
partial differential equations (see e.g.~\cite{fraenkel}) that was recently used in \cite{const} 
to study the extrema of the dynamic pressure in a periodic wave train.

We work in the reference frame introduced in Section~\ref{model.sec}, 
which moves at speed $c$ in the direction of propagation of the 
wave. In this reference frame, the wave is stationary and the wave crest is located at $(0,\eta(0))$. 
We first observe that the velocity field enjoys important symmetry properties with respect
to $x=0$. Namely, $u(x,y)$ is symmetric in $x$ and $v(x,y)$ is antisymmetric in $x$, 
see \cite{const_esch}. Hence, in view of the second equation in \eqref{free}, 
the pressure is symmetric with respect to $x=0$ and we only need to study it in the half plane $x\ge0$.

Our proof now proceeds in two steps. We shall first establish the monotonicity properties
of the dynamic pressure function $p(x,y)$ along the boundary of the fluid subdomain
\[
\Omega_+=\{(x,y)\in\R^2 : -d<y<\eta(x), \ x>0\},
\]
as illustrated in Fig.~\ref{fig2}. Namely, $p(x,y)$ decreases strictly along the broken line 
\[
L=\{(0,y) : -d\le y \le \eta(0)\} \cup \{(x,-d): 0\le x<\infty\}
\]
as $(x,y)$ runs from $(0,\eta(0))$ to $(0,-d)$ and from $(0,-d)$ to $(\infty,-d)$. 
Also, $p(x,y)$ decreases strictly as $x$ increases along the free surface $y=\eta(x)$.
Furthermore, we show that $p(x,y)\to 0$ as $x\to\infty$ along each of these curves.
The proof will then be completed by observing that, thanks to the maximum principle, 
the extrema of $p$ cannot be attained in the interior of 
$\overline{\Omega}_+$.

\begin{figure}[th]
\includegraphics[width=130mm]{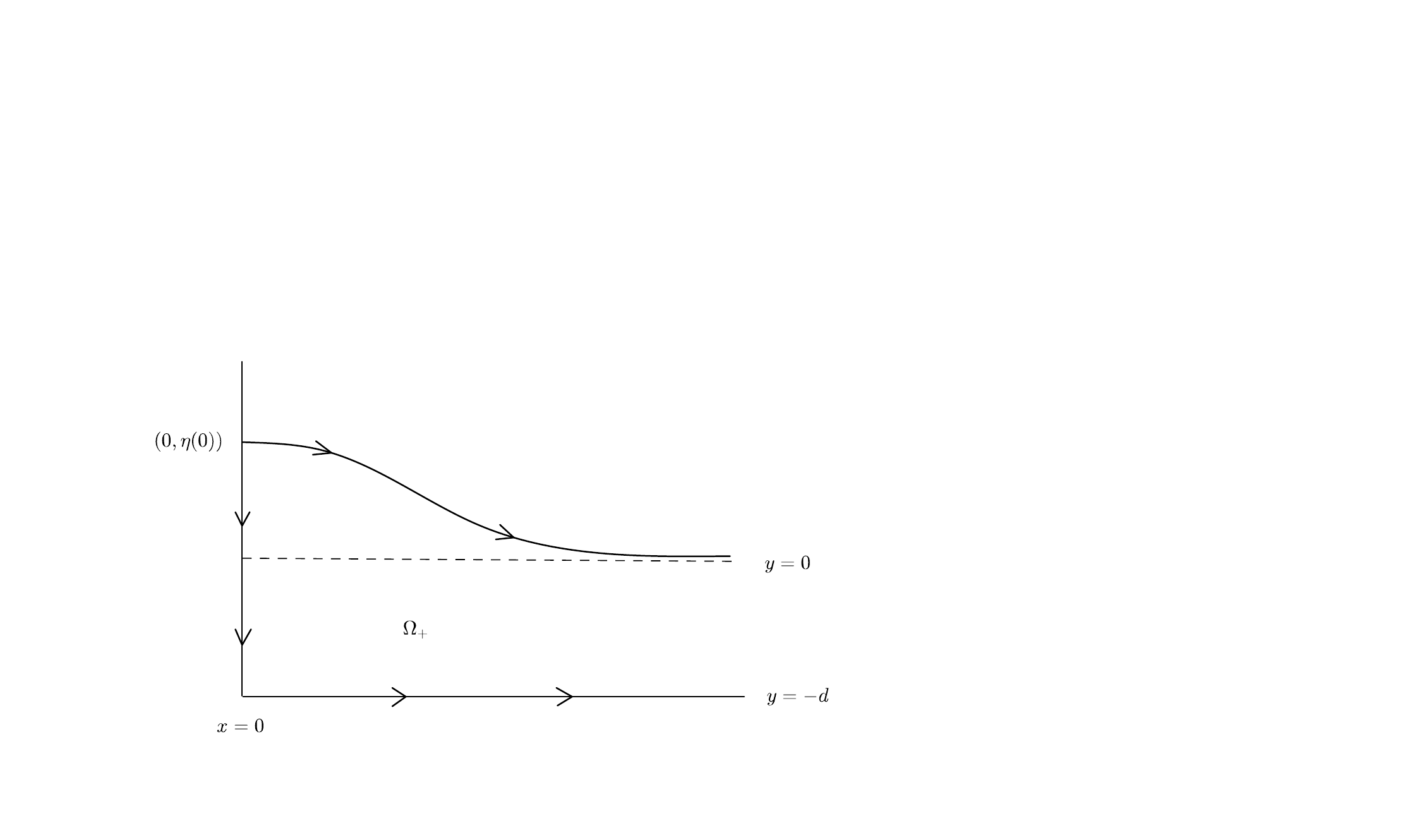}
\caption{Monotonicity along the boundary.}
\label{fig2}
\end{figure}

\smallskip
\noindent
\underline{Step~1:} In view of \eqref{bed} and the antisymmetry of $v$, we have that $v=0$
on $L$, while \eqref{surface} ensures that $v>0$ along $y=\eta(x), \ x>0$. Since $v$ is harmonic
in $\Omega_+$, the maximum principle implies $v>0$ in $\Omega_+$. Now, using the second
equation in \eqref{free}, the formula \eqref{dynamic} reduces to
\begin{equation}\label{dynamic2}
p(x,y)=\frac{c^2}{2}-\frac{v^2+(u-c)^2}{2}.
\end{equation}
Thanks to \eqref{continuity} and \eqref{irrot}, it follows by Hopf's boundary-point 
lemma \cite{fraenkel} that
\begin{align}\label{hopf}
p_x(x,-d)	& = (u(x,-d)-c)v_y(x,-d)<0, \quad x>0 \\
p_y(0,y)	& = -(u(0,y)-c)v_x(0,y)>0, \quad -d<y<\eta(0), \label{hopf2}
\end{align}
which yields the monotonicity property along $L$. On the free surface,
\eqref{Patm} and \eqref{dynamic} simply give $p(x,\eta(x))=g\eta(x)$, 
and the claim follows from the monotonicity of $\eta(x), \ x>0$.
We conclude this first step by observing that, in fact, \eqref{rest} and 
\eqref{dynamic2} imply
\begin{equation}\label{decay}
p(x,y) \to 0 \quad\text{as}\quad |x|\to\infty,
\end{equation}
at any depth $y$ in the fluid domain.

\smallskip
\noindent
\underline{Step~2:} In view of \eqref{dynamic2}, a direct calculation using the fact
that $\psi$ is harmonic shows that 
\begin{equation}\label{super}
p_{xx}+p_{yy}=-\frac{2(p_x^2+p_y^2)}{\psi_x^2+\psi_y^2}\le 0,
\end{equation}
so that $p$ is superharmonic in $\Omega_+$. We deduce from the maximum principle 
that the minimum of $p$ cannot be reached at an interior point of $\Omega_+$ unless
$p$ is constant, which would contradict \eqref{hopf}. It therefore follows from \eqref{decay}
that 
\begin{equation}\label{min}
p>0 \quad\text{in} \quad \Omega_+.
\end{equation}

Furthermore by \eqref{super}, $p(x,y)$ satisfies the quasilinear elliptic equation
\[
p_{xx}+p_{yy}+a(x,y)p_x+b(x,y)p_y=0,
\]
where the variable coefficients $a,b$ are defined throughout $\Omega_+$ by
\[
a=\frac{2p_x}{v^2+(u-c)^2} \quad\text{and}\quad
b=\frac{2p_y}{v^2+(u-c)^2}.
\]
Invoking again the maximum principle, we see that $p$ can only attain its maximum 
on the boundary of $\Omega_+$, which concludes the proof of the theorem.
\end{proof}

\begin{remark}\label{tech}
\rm
We restricted Theorem~\ref{main.thm} to smooth waves because
the coefficients $a(x,y)$ and $b(x,y)$ fail to be bounded 
at the wave crest for the wave of greatest height, so the conclusion about the maximum
of $p(x,y)$ cannot be inferred in a straightforward way. One should consider a finer
analysis in the spirit of \cite{lyons} to handle the wave of greatest height.
Nevertheless, the conclusion \eqref{min} about the minimum still follows from the arguments above 
for the wave of greatest height.

We also point out here that a result in the spirit of Theorem~\ref{main.thm} for solitary waves
with non-zero vorticity would be of great interest. This would involve new challenging
difficulties, for the present approach based on properties of harmonic functions breaks down in this case.
We refer the reader to \cite{var, strauss_wheeler, henry} for some considerations in this direction.
\end{remark}

An important consequence of the above analysis 
for practical applications is the following estimate on the wave
height from pressure measurements at the bed. 

\begin{corollary}\label{height.cor}
The maximum elevation $h$ of a travelling solitary wave
satisfies the lower bound
\begin{equation*}
h>\frac{P(0,-d)-P_\infty}{g},
\end{equation*}
where $P_\infty=P(\infty,-d)$ is the asymptotic value of the pressure at the bed in the fluid at rest.
\end{corollary}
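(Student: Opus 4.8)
The plan is to read the corollary off directly from Theorem~\ref{main.thm} by evaluating the dynamic pressure at two points of the vertical segment $\{x=0\}$ and then invoking the strict monotonicity established in Step~1 of the proof. The maximum elevation is $h=\eta(0)$, since the profile is a single symmetric hump decaying to zero at infinity.

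First I would record the two relevant values of $p$. On the free surface the relation $p(x,\eta(x))=g\eta(x)$ has already been derived; at the crest this gives $p(0,\eta(0))=g\eta(0)=gh$, which is the global maximum of $p$ by the theorem. Next I would compute $p$ at the bed point $(0,-d)$. From the definition~\eqref{dynamic}, $p(0,-d)=P(0,-d)-P_\textrm{atm}-gd$. To eliminate the constant $P_\textrm{atm}+gd$, I would use the decay~\eqref{decay}: letting $x\to\infty$ along the bed gives $0=P_\infty-P_\textrm{atm}-gd$, hence $P_\textrm{atm}+gd=P_\infty$. Substituting back yields the clean identity $p(0,-d)=P(0,-d)-P_\infty$.

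Finally I would invoke the strict monotonicity of $p$ along $L$, which follows from \eqref{hopf2}: as $(x,y)$ runs from $(0,\eta(0))$ down to $(0,-d)$, the function $p$ decreases strictly, so $p(0,-d)<p(0,\eta(0))=gh$. Combining this with the identity of the previous step gives $P(0,-d)-P_\infty<gh$, that is, the desired bound $h>\bigl(P(0,-d)-P_\infty\bigr)/g$.

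I do not expect any serious obstacle here; the corollary is essentially a translation of the theorem from the dynamic pressure $p$ back into the physical pressure $P$. The only two points deserving a little care are the evaluation of the constant $P_\textrm{atm}+gd$, for which the decay~\eqref{decay} at the bed is precisely the ingredient needed, and the \emph{strictness} of the inequality, which comes for free from the strict monotonicity along $L$ rather than merely from the fact that the maximum of $p$ is attained at the crest.
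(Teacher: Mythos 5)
Your proposal is correct and follows essentially the same route as the paper: express $p(0,-d)$ via \eqref{dynamic}, eliminate the constant $P_\textrm{atm}+gd$ using the decay \eqref{decay} at the bed, and conclude by the strict monotonicity \eqref{hopf2} along the vertical segment together with $p(0,\eta(0))=g\eta(0)=gh$. The paper phrases the cancellation as $P(0,-d)-P_\infty=p(0,-d)-p(\infty,-d)$ rather than solving for $P_\textrm{atm}+gd$, but this is the same computation.
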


\begin{proof}
On the flat bed $y=-d$, we have by \eqref{dynamic} that 
$p(x,-d)=P(x,-d)-(P_\textrm{atm} + gd)$. Therefore, it follows from 
\eqref{hopf2} and \eqref{decay}  that 
\[
P(0,-d)-P_\infty=p(0,-d)-p(\infty,-d)<p(0,\eta(0))=g\eta(0),
\]
where the last equality comes from \eqref{Patm} and \eqref{dynamic}.
\end{proof}

This result should be put in perspective with \cite{const_recovery}, where the full solitary
wave profile is actually recovered from pressure measurements at the bottom. Note that
Corollary~\ref{height.cor} holds also for the wave of greatest height, as it only relies on the
boundary-point estimate in \eqref{hopf2}.


\end{document}